\newtheorem{defn0}{Definition}[section]
\newtheorem{prop0}[defn0]{Proposition}
\newtheorem{thm0}[defn0]{Theorem}
\newtheorem{lemma0}[defn0]{Lemma}
\newtheorem{corollary0}[defn0]{Corollary}
\newtheorem{example0}[defn0]{Example}
\newtheorem{remark0}[defn0]{Remark}
\newtheorem{conjecture0}[defn0]{Conjecture}
\newenvironment{theorem}{\bigskip \begin{thm0}}{\end{thm0}}
\newenvironment{corollary}{\bigskip \begin{corollary0}}{\end{corollary0}}
\def\cocoa
\newcommand{\m}{\mathfrak{m}}
\newcommand{\n}{\mathfrak{n}}
\title{  \bf \huge A family of local rings with rational Poincar{\'e} Series
\footnote{ 2000 {\it Mathematics Subject Classification}. Primary
13D40; Secondary 13H10;
\newline
\indent \ \ {\it Key words and Phrases:} Gorenstein, Artin,
Stretched, Poincar\'{e} series.}}
\author{\large   Juan Elias
\thanks{Partially supported by  MEC-FEDER MTM2007-67493}
\and \large Giuseppe Valla \
\thanks{Partially supported by MIUR}}
\date{\today}
\begin{document}

\maketitle

\begin{abstract}
In this note we compute  the Poincare Series of almost stretched Gorenstein local rings.
 It turns out that it is rational.\end{abstract}

\bigskip
\section{Introduction.}  Let $(R,\n)$ be a regular local ring and  $k=R/\n$ its residue
field which we assume of characteristic zero.

Given an ideal $I\subseteq \n^2$, a classical problem in Commutative
Algebra is to study the Poincar{\'e} series
$$\mathbb{P}_A(z):=\sum_{i\ge 0}\dim_kTor_i^A(k,k))z^i$$ of the
local ring $(A=R/I,\m=\n/I).$  This is the generating function of
the sequence of Betti numbers of  a minimal free resolution of $k$
over $A.$

Due to the classical conjecture of Serre, the main issue is concerning the rationality of this series.
We know by the example of Anick, see \cite{A},  that this series can be non rational,
but  there  are relatively few classes of local rings for which the question has been settled.
See \cite{Av} for a detailed study of these and other relevant related problems in local algebra.

 Given a Cohen-Macaulay local ring $A=R/I,$ we say that $A$ is {\bf stretched} if there exists
 an artinian reduction $B$ of $A$ such that the square of its maximal ideal is a principal ideal.
 Instead, if the square of the maximal ideal of an artinian reduction is minimally generated by
 two elements, we say that $A$ is {\bf almost stretched}. See \cite{S1}, \cite{RV}, \cite{EV1}
 and \cite{EV2} for papers concerning   these notions.

 In \cite{S2} J. Sally computed  the Poincar{\'e} series of a stretched Cohen-Macaulay local
 rings and obtained, as a corollary,  the rationality of the series. It follows  that local
 Gorenstein rings of multiplicity at most five have rational Poincar{\'e} series.

 In this paper we compute   the Poincar{\'e} series of an almost stretched Gorenstein local
  ring, thus exhibiting  its rationality. Using this result, we can prove the rationality of
   the Poincar{\'e} series of any Gorenstein local rings of multiplicity at most seven.

 We are not developing new methods for the computation of the Betti numbers of the minimal
  $A$-free resolution of $k;$  rather we show  that the structure theorem we proved in \cite{EV1}
   for Artinian almost stretched Gorenstein local rings is very much suitable to  the computation of $Tor_i^A(k,k).$

 In the following, for  a local ring $(A,\m,k:=A/\m)$ of dimension $d,$ we denote by $h$ the
 embedding codimension of $A$, namely the integer $h:=\dim_k(\m/\m^2)-d.$ Recall, see \cite{Ab},
 that the multiplicity $e$ of  a Cohen-Macaulay local ring $A$ of embedding codimension $h$
 satisfies the inequality  $e\ge h+1.$ Further, in the extremal case $e=h+1,$ it is well known that
  $\mathbb{P}_A(z)$ is rational.

 The main result of this paper is the following theorem.

  \begin{theorem} Let $A=R/I$ be an almost stretched Gorenstein local ring of dimension
  $d$ and embedding codimension $h.$ Then $$\mathbb{P}_A(z)=\frac{(1+z)^d}{1-hz+z^2}.$$
 \end{theorem}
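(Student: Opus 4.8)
The plan is to compute the Poincaré series by exploiting the structure theorem from \cite{EV1} for almost stretched Gorenstein local rings, together with standard reductions. Let me think about how the pieces fit.

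First, the dimension reduction. The factor $(1+z)^d$ in the formula is exactly the Poincaré series of a polynomial ring in $d$ variables (Koszul contribution), so I'd expect to reduce to the Artinian case by passing to an Artinian reduction. The key fact is that if $x$ is a regular element that's part of a minimal generating set of $\m$, then $\mathbb{P}_A(z) = (1+z)\mathbb{P}_{A/xA}(z)$. Iterating $d$ times, $\mathbb{P}_A(z) = (1+z)^d \mathbb{P}_B(z)$ where $B$ is an Artinian reduction which is again almost stretched Gorenstein of embedding codimension $h$. So it suffices to prove $\mathbb{P}_B(z) = 1/(1-hz+z^2)$ in the Artinian case.

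Now the Artinian case. The denominator $1-hz+z^2$ strongly suggests a linear recurrence $b_i = h\,b_{i-1} - b_{i-2}$ on the Betti numbers $b_i = \dim_k \mathrm{Tor}_i^B(k,k)$. So the real work is constructing the minimal free resolution of $k$ over $B$ and reading off this recurrence. Here I would invoke the explicit structure theorem of \cite{EV1}: it describes $B$ in terms of generators and relations, presumably exhibiting $\m^2$ as generated by two elements with controlled multiplication, and giving a concrete form for the defining ideal $I$. From such a presentation one typically builds a minimal resolution either by an Acyclic Closure / Tate construction or by identifying $B$ as a Golod or a complete intersection modulo a known ring; given that the answer is rational with a quadratic denominator, I suspect the structure lets one write down the differentials explicitly and verify minimality (all entries in $\m$) and exactness in each degree. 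The recurrence then emerges from the repetitive block structure of the maps in high homological degree.

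The main obstacle will be establishing the recurrence $b_i = h b_{i-1} - b_{i-2}$ rigorously from the structure theorem — that is, controlling the minimal resolution in all homological degrees rather than just low ones. Concretely, I would: (i) fix the explicit presentation of the Artinian almost stretched Gorenstein ring $B$ from \cite{EV1}; (ii) compute the first few terms $\mathrm{Tor}_0, \mathrm{Tor}_1, \mathrm{Tor}_2$ directly to pin down initial conditions and confirm $b_0=1$, $b_1 = h$; (iii) construct the resolution via a periodic or self-similar pattern in the maps, most likely using the Tate/acyclic-closure machinery adapted to the Gorenstein condition, and prove by induction that the Betti numbers satisfy the two-term recurrence; and (iv) solve the recurrence via generating functions to obtain $\sum_i b_i z^i = 1/(1 - hz + z^2)$, then multiply by $(1+z)^d$. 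The delicate point throughout is verifying minimality of the constructed resolution, since only a minimal resolution computes the $\mathrm{Tor}$ dimensions; I expect the almost stretched hypothesis (so $\dim_k \m^2 / \m^3 \le 2$ after reduction) to be exactly what keeps the homological algebra tractable and forces the quadratic denominator.
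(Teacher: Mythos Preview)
Your reduction to the Artinian case via a regular sequence is correct and matches the paper exactly. The divergence begins after that: you propose to compute the Betti numbers of the Artinian almost stretched Gorenstein ring $B$ by building its minimal resolution directly (via a Tate/acyclic-closure construction or an explicit periodic pattern), then reading off the recurrence $b_i = hb_{i-1} - b_{i-2}$. You yourself flag the obstacle: controlling the resolution in \emph{all} homological degrees and proving minimality. As written, this is not a proof but a hope; the structure theorem of \cite{EV1} gives generators of $I$, but it is not at all clear how to turn that into a closed-form description of the minimal $B$-resolution of $k$ in every degree.

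The paper avoids this entirely. After invoking the structure theorem, it never constructs a resolution. Instead it applies three classical change-of-rings formulas in sequence: (i) the Avramov--Levin formula for factoring out the socle of a Gorenstein ring, $\mathbb{P}_A(z) = \mathbb{P}_{A/(0:\m)}(z)/(1+z^2\,\mathbb{P}_{A/(0:\m)}(z))$; (ii) the Gulliksen--Levin formula for killing a socle element in $\m\setminus\m^2$, applied $h-2$ times to dispose of the variables $x_3,\dots,x_h$; and (iii) one more application of Avramov--Levin to recognize the remaining two-variable quotient as a complete intersection modulo its socle. The endpoint is a codimension-two complete intersection, whose Poincar\'e series is $1/(1-z)^2$ by Tate's theorem. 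Back-substituting through the three formulas is then pure algebra and yields $1/(1-hz+z^2)$.

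So the missing idea in your proposal is precisely this cascade of change-of-rings reductions. The structure theorem is used not to build a resolution but to identify, in the quotient $A/(0:\m)$, a large supply of socle elements outside $\m^2$, and then to recognize the residual two-variable ring as again almost stretched Gorenstein (hence amenable to a second Avramov--Levin step). Your direct approach might be salvageable in principle, but as stated it lacks the mechanism that actually closes the argument.
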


 \bigskip
 \section{Proof of the Theorem}

 The main ingredient of the proof of our result are the following classical
 ``change of rings" theorems.
 The first one, see \cite{T}, relates the Betti numbers of $A$ with those of
 $A/xA$ when $x$ is a non-zero divisor in the local ring $A.$

\vskip 2mm
a) Let $x$ be a non-zero divisor in $A.$ Then

 $$\mathbb{P}_A(z)=\begin{cases}
      (1+z)\mathbb{P}_{A/xA}(z) &  \ \ x\in \m\setminus \m^2\\
   (1-z^2)\mathbb{P}_{A/xA}(z)   & \ \ x\in \m^2 .
\end{cases}$$

\vskip 2mm The second one, see \cite{GL}, relates the Betti numbers of $A$ with
those of $A/xA$ when $x$ is a socle element.
\vskip 2mm
b) Let $x\in \m\setminus \m^2$ be an element in the socle $(0:_A\m)$ of $A.$
Then $$\mathbb{P}_A(z)=\frac{\mathbb{P}_{A/xA}(z)}{1-z \ \mathbb{P}_{A/xA}(z)}.$$

\vskip 2mm The third one, see \cite{AL}, relates the Betti numbers of the
Artinian Gorenstein local ring $A$ with those of $A$ modulo the socle.
\vskip 2mm
c)  If $(A,\m)$ is an Artinian local Gorenstein ring, then
$$\mathbb{P}_A(z)=\frac{\mathbb{P}_{A/(0:\m)}(z)}{1+z^2 \ \mathbb{P}_{A/(0:\m)}(z)}.$$

We start now proving the Theorem. Let $J:=(a_1,\dots,a_d)$ be the ideal generated
by a minimal reduction of $\m,$ such that $A/J$ is almost stretched and Gorenstein.
 Since $\{a_1,\dots,a_d\}$ is a regular sequence on $A,$ we have by a)
 $$\mathbb{P}_A(z)=(1+z)^d \ \mathbb{P}_{A/J}(z).$$

Hence we may assume that $(A=R/I,\m=\n/I)$ is an Artinian almost stretched
Gorenstein local ring of embedding dimension  $h.$
In this case we   proved in  \cite{EV1}, Proposition 4.8,  that we can find
integers $s\ge t+1\ge 3$ depending on the Hilbert function of $A$,   a minimal
system of generators $\{x_1,\dots,x_h\}$ of the maximal ideal $\n$ of $R$ and an
element $a\in R$ such that $I$ is generated by the elements:
$$ \{x_1x_j\}_{j=3,\dots,h}\ \ \{x_ix_j\}_{2\le i<j\le h}\ \ \{x_j^2-x_1^s\}_{j=3,\dots,h}\ \
x_2^2-ax_1x_2-x_1^{s-t+1},\ \ x_1^tx_2.$$ Further $\overline{x_1}^s\in A=R/I$
is the generator of the socle $0:\m$ of $A.$ Hence $$A/(0:\m)\simeq
R/(I+(x_1^s))=R/K$$ where $K$ is the ideal in $R$ generated by
$$ \{x_1x_j\}_{j=3,\dots,h}\ \ \{x_ix_j\}_{2\le i<j\le h}\ \ \{x_j^2\}_{j=3,\dots,h}\ \
x_2^2-ax_1x_2-x_1^{s-t+1},\ \ x_1^tx_2,\ \ x_1^s.$$
Notice that by c) we have
\begin{equation}\label{A}\mathbb{P}_A(z)=\frac{\mathbb{P}_{A/(0:\m})(z)}{1+z^2 \
 \mathbb{P}_{A/(0:\m)}(z)}=\frac{\mathbb{P}_{R/K}(z)}{1+z^2 \ \mathbb{P}_{R/K}(z)}
 \end{equation}
 so that we are left to compute the Poincar{\'e} series of $R/K.$

It is clear that $\overline{x}_3,\dots,\overline{x}_h\in \m  \setminus \m^2 $ are
elements in the socle of $R/K.$ Hence we can use $h-2$ times b)  to get
\begin{equation}\label{K}\mathbb{P}_{R/K}(z)=\frac{\mathbb{P}_{S/L}(z)}{1-(h-2)z\ \mathbb{P}_{S/L}(z)}
\end{equation} where $S=R/(x_3,\dots,x_h)$ is a two dimensional regular local ring with maximal
ideal $\n=(x_1,x_2)$ and $L$ is the ideal
$$L:=(x_2^2-ax_1x_2-x_1^{s-t+1},\ \ x_1^tx_2,\ \ x_1^s).  $$
Now let $V:=(x_2^2-ax_1x_2-x_1^{s-t+1},\ \ x_1^tx_2);$  in \cite{EV2},
Theorem 4.7 we proved that $S/V$ is an almost stretched Artinian
Gorenstein local ring with socle generated by $\overline{x}_1^s.$
Hence, by using c), we get
\begin{equation}\label{V} \mathbb{P}_{S/V}(z)=\frac{\mathbb{P}_{S/L}(z)}{1+z^2\ \mathbb{P}_{S/L}(z)}.
\end{equation}
Finally, the  ideal $V$ is generated by a regular sequence in $\n^2$ so that, by a),
we get $$\mathbb{P}_{S/V}(z)=\frac{(1+z)^2}{(1-z^2)^2}=\frac{1}{(1-z)^2}.$$
By using (\ref{V}), this last equality gives  $\mathbb{P}_{S/L}(z)=\frac{1}{1-2z}$
and thus  from (\ref{K}) we get $\mathbb{P}_{R/K}(z)=\frac{1}{1-hz}.$
Finally, by (\ref{A}) we get $$\mathbb{P}_A(z)=\frac{1}{1-hz+z^2}$$ and the conclusion follows.
\vskip 2mm
 A consequence of the above Theorem is the rationality of the Poincar{\'e} series of
 any Gorenstein local ring of multiplicity $e\le h+4.$
\begin{corollary} Let $A$ be a Gorenstein local ring of dimension $d$, multiplicity $e$
and embedding codimension $h$. If $e=h+1,h+2,h+3,h+4$ then $\mathbb{P}_A(z)$ is rational.
\end{corollary}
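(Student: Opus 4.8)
The strategy is to pass to an Artinian reduction and then show that the numerical hypothesis $e\le h+4$ forces that reduction to be stretched or almost stretched, so that rationality follows either from the classical minimal-multiplicity case, from Sally's computation for stretched rings \cite{S2}, or from the Theorem. After completing and writing $A$ as a quotient of a regular local ring, and after harmlessly enlarging the residue field so that a minimal reduction of $\m$ exists (neither operation affects $e$, $h$, nor the rationality of $\mathbb{P}_A$), let $J=(a_1,\dots,a_d)$ be generated by a maximal regular sequence of elements of $\m\setminus\m^2$ and set $B=A/J$. Applying the first branch of the change-of-rings formula (a) exactly $d$ times gives $\mathbb{P}_A(z)=(1+z)^d\,\mathbb{P}_B(z)$, so it suffices to prove $\mathbb{P}_B$ rational. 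Here $B$ is Artinian Gorenstein of embedding codimension $h$, its length equals $e$, and its socle $(0:_B\m_B)$ is one--dimensional.

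The crux is to bound $h_2:=\dim_k(\m_B^2/\m_B^3)$, which is the minimal number of generators of the ideal $\m_B^2$; recall that $B$ is stretched when $h_2\le 1$ and almost stretched when $h_2=2$. Since $\ell(B)=e$, we have $\ell(\m_B^2)=e-h-1\le 3$. Let $s$ be the socle degree, i.e. the largest $i$ with $\m_B^i\neq 0$. The Gorenstein hypothesis enters through the observation that $\m_B^s\subseteq(0:_B\m_B)$, so the top power $\m_B^s$ is one--dimensional. If $s\le 2$ then $\m_B^2$ is itself contained in the socle, giving $h_2\le 1$. If $s\ge 3$, then each of $\m_B^2/\m_B^3,\dots,\m_B^s/\m_B^{s+1}$ is nonzero, whence $h_2=\ell(\m_B^2)-\sum_{i=3}^{s}\dim_k(\m_B^i/\m_B^{i+1})\le 3-(s-2)=5-s$. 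In particular $s\le 4$, and one reads off $h_2\le 2$, with $h_2=2$ possible only when $s=3$ and $e=h+4$. Thus in every case $B$ is of minimal multiplicity, stretched, or almost stretched.

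It remains to assemble the cases. If $e=h+1$ then $\m_B^2=0$ and rationality of $\mathbb{P}_B$ is classical; if $h_2\le 1$ then $B$ is stretched and $\mathbb{P}_B$ is rational by \cite{S2}; and if $h_2=2$ then $B$ is an Artinian almost stretched Gorenstein ring, so the Theorem yields $\mathbb{P}_B(z)=1/(1-hz+z^2)$, which is rational. Multiplying by $(1+z)^d$ gives the rationality of $\mathbb{P}_A$ in all four cases $e=h+1,h+2,h+3,h+4$.

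The one genuinely nontrivial point is the bound in the middle paragraph, and specifically the exclusion of $h_2=3$: a priori the extra length $e-h-1=3$ could all sit in degree two (socle degree $s=2$), which would make $\m_B^2$ three--generated and put $B$ outside the reach of both Sally's theorem and ours. It is exactly the one--dimensionality of the Gorenstein socle that rules this out, by forcing $\m_B^s$ to be one--dimensional and thereby pushing part of the length into strictly higher powers of $\m_B$. Everything else is routine bookkeeping with Hilbert functions together with the change-of-rings formula (a).
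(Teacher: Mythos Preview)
Your proof is correct and follows essentially the same approach as the paper: pass to an Artinian reduction $B$, use the Gorenstein hypothesis (one--dimensional socle) together with the length constraint $\ell(\m_B^2)=e-h-1\le 3$ to force $h_2\le 2$, and then invoke Sally's result or the Theorem. The paper is terser---it simply lists the four admissible Hilbert series $\{1,h,1\},\{1,h,1,1\},\{1,h,2,1\},\{1,h,1,1,1\}$ without spelling out the socle argument---whereas you make explicit the step that excludes $h_2=3$; but the underlying idea is identical.
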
\begin{proof} We need only to consider the case $h+2\le e\le h+4.$
Since any Artinian reduction $B$ of $A$ is a Gorenstein local ring with  the same
multiplicity and the same embedding codimension, the possible Hilbert series of
$B$ are $$\{1,h,1\},\{1,h,1,1\},\{1,h,2,1\},\{1,h,1,1,1\}.$$
This proves that $A$ is either stretched or almost stretched.
The conclusion follows by Sally's result and the above Theorem.
\end {proof}

\begin{corollary} If $A$  is a Gorenstein local ring of multiplicity
at most seven, then $\mathbb{P}_A(z)$ is rational.
\end{corollary}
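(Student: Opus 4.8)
The plan is to reduce the whole statement to the range of multiplicities already settled in the previous corollary, handling by hand the few cases that fall outside it. The two invariants that govern the situation are the multiplicity $e$ and the embedding codimension $h$, subject to the inequality $e\ge h+1$ recorded in the introduction, and since both are unaffected by passing to an Artinian reduction (which only changes the Poincar\'e series by the factor $(1+z)^d$ via change of rings a)) it suffices to discuss rationality in terms of $e$ and $h$. First I would dispose of the generic situation: if $h\ge 3$, then $h+4\ge 7\ge e$, so the inequalities $h+1\le e\le h+4$ both hold and the previous corollary applies verbatim, giving the rationality of $\mathbb{P}_A(z)$. This is where the main theorem, through the previous corollary, does all the real work.

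The remaining cases are exactly $h\le 2$, and here I would argue directly rather than through stretchedness. Writing $A=R/I$ with $R$ a regular local ring realizing the embedding codimension, so that $I\subseteq \n^2$ and $\operatorname{ht} I=h$, the case $h=0$ gives $A$ regular, the case $h=1$ gives $A$ a hypersurface, and the case $h=2$ gives a Gorenstein ideal of height two, which by the classical structure theorem (Serre) is a complete intersection. Thus in every case with $h\le 2$ the ring $A$ is a complete intersection, defined by a regular sequence of $h$ elements of $\n^2$. Applying change of rings a) to this regular sequence $h$ times, starting from $\mathbb{P}_R(z)=(1+z)^{\dim R}$, yields $\mathbb{P}_A(z)=(1+z)^{\dim R}/(1-z^2)^{h}$, which is plainly rational.

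The one point requiring care is checking that the case split is exhaustive, i.e. that no Gorenstein ring of multiplicity at most seven escapes both the previous corollary and the complete intersection cases. This is exactly what the combination $e\ge h+1$ and $e\le 7$ guarantees: whenever $h\ge 3$ one automatically has $e\le h+4$, while whenever $h\le 2$ the ring is a complete intersection regardless of the value of $e$. The main (and essentially only) obstacle is therefore the identification of the low-codimension cases with complete intersections; once the height-two Gorenstein $=$ complete intersection fact is invoked, rationality follows at once from the change of rings formula and the proof is complete.
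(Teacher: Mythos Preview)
Your proof is correct and takes a somewhat different organizational route from the paper's. The paper proceeds by enumerating the possible Hilbert functions of an Artinian reduction with $e\le 7$, observes that the single candidate which is neither stretched nor almost stretched, namely $\{1,2,3,1\}$, cannot occur because Gorenstein in codimension two forces a complete intersection, and then finishes all surviving cases by Sally's theorem and the main Theorem. You instead split directly on $h$: for $h\ge 3$ the inequality $e\le 7\le h+4$ feeds the previous corollary, while for $h\le 2$ you invoke the complete intersection structure (regular, hypersurface, or Serre's codimension-two theorem) and compute $\mathbb{P}_A(z)$ explicitly via change of rings~a). Both arguments rest on the same key input---Gorenstein of codimension two is a complete intersection---but the paper uses it only to rule out one Hilbert function and still treats the remaining codimension-two rings through the stretched/almost stretched machinery, whereas you use it to dispatch all $h\le 2$ rings at once without recourse to the main Theorem. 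Your argument is a bit more streamlined and avoids the case enumeration; the paper's makes the geometry of the possible Hilbert functions explicit.
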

\begin{proof} As before the possible Hilbert series of any artinian reduction
$B$ of $A$ are $$\{1,5,1\},\{1,4,1,1\},\{1,3,2,1\},\{1,3,1,1,1\},$$
$$\{1,2,3,1\},\{1,2,2,1,1\},\{1,2,1,1,1,1\},\{1,1,1,1,1,1,1\}.$$
But $\{1,2,3,1\}$ is not allowed because Gorenstein in codimension two implies complete
intersection. In all the remaining cases $A$ is either stretched or almost stretched and we get the conclusion.
 \end{proof}

\vskip 2mm
 \noindent
 {\bf Remark 1.}
B{\o}gvad in  \cite{Bog83} showed that there exist  Artinian
Gorenstein local rings of multiplicity $26$ with non-rational
Poincar\'{e} series.

 \vskip 2mm
 \noindent
 {\bf Remark 2.} The Hilbert function of an  almost stretched Artinian
 Gorenstein local ring $A$ has the following shape
 \begin{center} \begin{tabular}{|c|c|c|c|c|c|c|c|c|c|c|c|c|}
n &0 & 1  & 2 & \dots & t & t+1  &\dots & s  & s+1  \\ \hline
$H_A(n)$ &1  & h  & 2 & \dots & 2 & 1 &  \dots & 1  &  0  \\
\end{tabular} \end{center}  for integers $s$ and $t$ such that $s\ge t+1 \ge 3.$
On the contrary the Poincar{\'e} series of $A$ is independent from $t$ and $s.$

\vskip 2mm \noindent {\bf Remark 3.} The Poincar{\'e} series of
stretched and almost stretched Gorenstein local rings with the same
dimension and the same embedding dimension coincide, see \cite{S2}
Theorem 2.

\providecommand{\bysame}{\leavevmode\hbox to3em{\hrulefill}\thinspace}

\bigskip
\bigskip
\noindent
Juan Elias\\
Departament D'{\`A}lgebra i Geometria \\
Facultat de Matem\`{a}tiques\\
Universitat de Barcelona\\
Gran Via 585, 08007 Barcelona, Spain
e-mail: {\tt elias@ub.edu}

\bigskip
\noindent
Giuseppe Valla\\
Dipartimento di Matematica\\
Universit{\`a} di Genova\\
Via Dodecaneso 35, 16146 Genova, Italy\\
e-mail: {\tt valla@dima.unige.it}

\end{document}